\def\NZQ{\mathbb}               
\def\NN{{\NZQ N}}
\def\QQ{{\NZQ Q}}
\def\RR{{\NZQ R}}
\newtheorem{Theorem}{Theorem}[section]
\newtheorem{Lemma}[Theorem]{Lemma}
\newtheorem{Corollary}[Theorem]{Corollary}
\newtheorem{Example}[Theorem]{Example}
\let\epsilon\varepsilon
\let\phi=\varphi
\let\kappa=\varkappa
\begin{document}

\title{Semigroups of Valuations Dominating Local Domains}

\author{Steven Dale Cutkosky}
\thanks{The  author was partially supported by NSF }

\maketitle

Let $(R,m_R)$ be an equicharacteristic local domain, with quotient field $K$. Suppose that $\nu$ is
a valuation of $K$ with valuation ring $(V,m_V)$. Suppose that $\nu$ dominates $R$; that is,
$R\subset V$ and $m_V\cap R=m_R$. The possible value groups $\Gamma$ of $\nu$ have been extensively
studied and classified, including  in the papers MacLane \cite{M}, MacLane and Schilling \cite{MS}, Zariski and Samuel \cite{ZS}, and Kuhlmann \cite{K}.
The most basic fact is that there is an order preserving
embedding of $\Gamma$ into $\RR^n$ with the lex order, where $n$ is the dimension of $R$. 

The
semigroup
$$
S^R(\nu)=\{\nu(f)\mid f\in m_R-\{0\}|
$$
is however not well understood, although it is known to encode important information about the
topology, resolution of singularities and ideal theory of $R$.

In Zariski and Samuel's classic book on Commutative Algebra \cite{ZS}, two general facts about the
semigroup $S^R(\nu)$ are proven (Appendix 3 to Volume II).
\begin{enumerate}
\item[1.] $S^R(\nu)$ is a well ordered subset of the positive part of the value group $\Gamma$ of
$\nu$ of ordinal type at most $\omega^h$, where $\omega$ is the ordinal type of the well ordered
set $\NN$, and $h$ is the rank of the valuation. \item[2.] The rational rank of $\nu$ plus the
transcendence degree of $V/m_V$ over $R/m_R$ is less than or equal to the dimension of $R$.
\end{enumerate}
The second condition is the Abhyankar inequality \cite{Ab}.

Prior to this paper, no other general constraints were known on  value  semigroups
$S^R(\nu)$. In fact, it was even unknown if the above conditions 1 and 2 characterize value
semigroups.

In this paper, we construct an example of a well ordered subsemigroup of
$\QQ_+$ of ordinal type $\omega$, which is not a value semigroup of an equicharacteristic
noetherian local domain. This  shows that the above conditions 1 and 2 do not characterize value
semigroups on equicharacteristic noetherian local domains. We constuct this  in  Example \ref{Example2} by finding a
new constraint, Theorem \ref{Theorem1}, on a semigroup being a value semigroup of an
equidimensional noetherian local domain of dimension $n$. In Corollary \ref{Corollary1}, we give a
stronger constraint on regular local rings.

In \cite{CT}, Teissier and the author give some examples showing that some surprising semigroups of
rank $>1$ can occur as   semigroups of valuations on noetherian domains, and raise the general
question of finding new constraints on
value semigroups and classifying semigroups which occur as value semigroups. 

The only semigroups which are realized by a valuation on a one dimensional regular local ring are
isomorphic to the natural numbers. The semigroups which are realized by a valuation on a regular
local ring of dimension 2 with algebraically closed residue field are much more complicated, but are completely classified by Spivakovsky in \cite{S}. A different proof is given by  Favre and Jonsson in \cite{FJ}, and the theorem is formulated in the  context of semigroups by Cutkosky and Teissier \cite{CT}.
However, very little is known in higher dimensions. The classification of semigroups of valuations
on regular local rings of dimension two does suggest that there may be constraints on the rate of
growth of the number of new generators on semigroups of valuations dominating a noetherian domain.
We prove that there is such a constraint, giving a new necessary condition for a semigroup to be a
value semigroup. This is accomplished in Theorem \ref{Theorem1} and Corollary \ref{Corollary1}. The
constraint is  far from sharp, even for one dimensional regular local rings. The criterion
of Theorem \ref{Theorem1} is however sufficiently strong to allow us to give a very simple example,
Example \ref{Example2}, of a well ordered subsemigroup $S$ of $\QQ_+$ of ordinal type $\omega$ which is
not the semigroup of a valuation dominating an equidimensional noetherian local domain.

\section{Semigroups of valuations dominating polynomial rings}
Given a set $T$, $|T|$ will denote the cardinality of $T$. If $S$ is a totally ordered abelian semigroup,
$S_+$ will denote the set of positive elements of $S$.

Let $k$ be a field, $n$ be a positive integer, and $A_n=k[x_1,\ldots,x_n]$ be a polynomial ring in
$n$ variables. Let $m_n$ be the maximal ideal $m_n=(x_1,\ldots,x_n)$ of $A_n$. $A_n$ is naturally a
$k$ vector space. Let $F_{n,d}$ be the $k$ subspace of $m_n$ of polynomials of degree $\le d$ with
0 as the constant term.

 Suppose that $\nu$ is a valuation  of the quotient field of $A_n$
with valuation ring $V$. Suppose $A_n\subset V$ and $m_V\cap A_n= m_n$. Let $\Gamma$ be the value
group of $\nu$. Let $\Psi$ be the convex subgroup of real rank 1 of $\Gamma$, so that $\Psi$ is order isomorphic to a subgroup of $\RR$. Let
$S(\nu)=\nu(m_n-\{0\})\cap \Psi$.

If $S(\nu)\ne \emptyset$, then $S(\nu)$ is a well ordered semigroup of ordinal type $\omega$, by the first condition stated in the introduction, since $S(\nu)$ is a value semigroup on the quotient $A/\{f\in A\mid \nu(f)\not\in\Psi\}$.
 Let the smallest   element of $S(\nu)$ be $s_0$. Let
$S_d(\nu)=\nu(F_{d,n}-\{0\})$.
\begin{Lemma}\label{Lemma1} The upper bound
$$
|S_d(\nu)|< \binom{n+d}{n}
$$
holds for all $d\in\NN$.
\end{Lemma}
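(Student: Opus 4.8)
The plan is to bound $|S_d(\nu)|$ by the $k$-dimension of the finite-dimensional space $F_{n,d}$, exploiting the elementary but decisive fact that elements of $F_{n,d}$ with pairwise distinct values are linearly independent over $k$. First I would record the dimension: the monomials $x_1^{a_1}\cdots x_n^{a_n}$ with $0\le a_1+\cdots+a_n\le d$ form a $k$-basis of the space of polynomials of degree $\le d$, and there are exactly $\binom{n+d}{n}$ of them; discarding the constant monomial shows $\dim_k F_{n,d}=\binom{n+d}{n}-1$.

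Next I would establish the independence statement. Since $k\subset A_n\subset V$ and every nonzero element of $k$ is a unit of $A_n$, hence of $V$, we have $\nu(c)=0$ for all $c\in k^\ast$. Consequently, given any finite collection $f_1,\ldots,f_r$ of elements of $F_{n,d}$ with pairwise distinct values $\nu(f_1),\ldots,\nu(f_r)$, and any $c_1,\ldots,c_r\in k$ not all zero, the ultrametric property of $\nu$ yields $\nu\bigl(\sum_i c_i f_i\bigr)=\min\{\nu(f_i)\mid c_i\ne 0\}$, the minimum being attained at a single index precisely because the values are distinct. In particular $\sum_i c_i f_i\ne 0$, so the $f_i$ are $k$-linearly independent.

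Finally I would assemble the count. For each value $s\in S_d(\nu)$ choose a preimage $f_s\in F_{n,d}-\{0\}$ with $\nu(f_s)=s$. By construction these preimages have pairwise distinct values, so by the previous paragraph the family $\{f_s\}_{s\in S_d(\nu)}$ is $k$-linearly independent in $F_{n,d}$. Hence $|S_d(\nu)|\le \dim_k F_{n,d}=\binom{n+d}{n}-1<\binom{n+d}{n}$, which is the asserted bound.

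I expect no serious obstacle: the argument is a direct application of the valuation-theoretic linear independence lemma together with the monomial count. The only point needing a moment's care is the verification that $\nu$ vanishes on $k^\ast$, so that the ultrametric estimate applies uniformly to all $k$-combinations; this is immediate from $k^\ast\subset A_n^\ast\subset V^\ast$.
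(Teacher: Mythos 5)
Your proof is correct, and it reaches the paper's bound by a slightly different mechanism. The paper's argument is a filtration argument: for $\lambda\in\Gamma$ the set $C_{\lambda}=\{f\in F_{d,n}\mid \nu(f)>\lambda\}$ is a $k$-subspace of $F_{d,n}$, and distinct elements $\lambda<\mu$ of $S_d(\nu)$ give a strict inclusion $C_{\mu}\subsetneq C_{\lambda}$ (any $f$ with $\nu(f)=\mu$ lies in $C_{\lambda}$ but not $C_{\mu}$); a strictly decreasing chain of subspaces of the space $F_{d,n}$, which has dimension $\binom{n+d}{n}-1$, forces $|S_d(\nu)|\le\binom{n+d}{n}-1$. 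You instead choose one representative $f_s$ for each $s\in S_d(\nu)$ and prove the representatives are $k$-linearly independent, using the ultrametric equality $\nu\bigl(\sum_i c_if_i\bigr)=\min\{\nu(f_i)\mid c_i\ne 0\}$, valid because the values are pairwise distinct and $\nu$ vanishes on $k^{\ast}$. The two arguments are dual ways of turning ``many distinct values'' into ``dimension at least that many'': a long chain of subspaces versus a large linearly independent set; both terminate in the same count $\dim_k F_{d,n}=\binom{n+d}{n}-1<\binom{n+d}{n}$. Your version has two small advantages: it nowhere needs the well-orderedness of $S_d(\nu)$, which the paper invokes (and which is not really essential even for the chain argument, since a strictly decreasing chain of subspaces of a finite-dimensional space is automatically finite), and it makes explicit the fact that $\nu(c)=0$ for $c\in k^{\ast}$, which the paper uses silently when asserting that $C_{\lambda}$ is a $k$-subspace. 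The paper's version, on the other hand, packages the value filtration $\{C_{\lambda}\}$ in a form that recurs throughout valuation theory (it is the associated graded construction in embryo), which is presumably why the author states it that way.
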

\begin{proof}
For $\lambda\in \Gamma$, the set $C_{\lambda}=\{f\in F_{d,n}\mid \nu(f)> \lambda\}$ is a $k$
subspace of $F_{d,n}$. Since $S_d(\nu)$ is well ordered and $\mbox{dim}_kF_{d,n}=\binom{n+d}{d}-1$,
the lemma follows.
\end{proof}

For $a,b\in\Psi$, let $[a,b)=\{c\in \Psi\mid a\le c <b\}$.

\begin{Lemma}\label{Lemma2}
For all $d\in\NN$,
$$
|S(\nu)\cap [0,(d+1)s_0)|<\binom{n+d}{n}.
$$
\end{Lemma}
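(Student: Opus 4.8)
The plan is to prove the single inclusion
$$
S(\nu)\cap[0,(d+1)s_0)\subseteq S_d(\nu),
$$
since granting it, Lemma \ref{Lemma1} immediately yields $|S(\nu)\cap[0,(d+1)s_0)|\le|S_d(\nu)|<\binom{n+d}{n}$, which is exactly the desired bound. So everything reduces to showing that any value $\gamma\in\Psi$ that is realized on $m_n$ and satisfies $\gamma<(d+1)s_0$ is in fact realized by a polynomial of degree $\le d$ with zero constant term.

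First I would fix $\gamma\in S(\nu)$ with $\gamma<(d+1)s_0$ and write $\gamma=\nu(h)$ for some $h\in m_n-\{0\}$. Expanding $h=\sum_\alpha c_\alpha x^\alpha$ into monomials (each with $|\alpha|\ge 1$, since $h\in m_n$) and using that $\nu(c_\alpha)=0$ for $c_\alpha\in k^*$, I get $\nu(c_\alpha x^\alpha)=\nu(x^\alpha)$. I then split $h=h_1+h_2$, where $h_1$ collects the monomials with $\nu(x^\alpha)\le\gamma$ and $h_2$ collects the rest. Every term of $h_2$ has value $>\gamma$, so $\nu(h_2)>\gamma$; since $\nu(h)=\gamma$, the ultrametric inequality forces $h_1\ne 0$ and $\nu(h_1)=\gamma$. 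Thus $\gamma$ is realized by $h_1$, and it only remains to bound the degree of $h_1$.

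The crux, and the step I expect to be the main obstacle, is this degree estimate, where the rank-one convex subgroup $\Psi$ must be handled with care. I would show every monomial $x^\alpha$ occurring in $h_1$ has degree $\le d$. Writing $\nu(x^\alpha)=\sum_i\alpha_i\nu(x_i)$ with each $\nu(x_i)>0$ (as $x_i\in m_n$ and $\nu$ dominates $A_n$), suppose some variable $x_i$ with $\alpha_i>0$ had $\nu(x_i)\notin\Psi$. Then $\nu(x^\alpha)\ge\nu(x_i)>\gamma$, because a positive element outside the rank-one convex subgroup $\Psi$ exceeds every element of $\Psi$ and $\gamma\in\Psi$; this contradicts $\nu(x^\alpha)\le\gamma$. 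Hence every such $x_i$ satisfies $\nu(x_i)\in\Psi$, so $\nu(x_i)\in S(\nu)$ and therefore $\nu(x_i)\ge s_0$ by minimality of $s_0$. Consequently $\nu(x^\alpha)=\sum_i\alpha_i\nu(x_i)\ge|\alpha|\,s_0$, and combining with $\nu(x^\alpha)\le\gamma<(d+1)s_0$ gives $|\alpha|<d+1$, i.e. $\deg x^\alpha\le d$.

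Finally I would assemble the pieces: $h_1$ has zero constant term and all its monomials have degree $\le d$, so $h_1\in F_{d,n}-\{0\}$, while $\nu(h_1)=\gamma$ shows $\gamma\in S_d(\nu)$. This establishes the inclusion $S(\nu)\cap[0,(d+1)s_0)\subseteq S_d(\nu)$, and the stated bound follows from Lemma \ref{Lemma1}.
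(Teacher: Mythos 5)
Your proof is correct and takes essentially the same route as the paper: both arguments reduce to Lemma \ref{Lemma1} by showing every value in $S(\nu)\cap[0,(d+1)s_0)$ is realized by an element of $F_{d,n}$, using the estimate that monomials of degree $\ge d+1$ have value $\ge (d+1)s_0$ together with the ultrametric inequality. The only differences are cosmetic: the paper splits $f=g+h$ by degree (with $h\in m_n^{d+1}$) where you split by monomial value and then bound the degree, and you spell out the convexity argument for $\Psi$ (needed to conclude $\nu(x_i)\ge s_0$ even when $\nu(x_i)\notin\Psi$) that the paper leaves implicit.
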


\begin{proof}
Since every element of $m_n$ has value $\ge s_0$, every element of $m_n^{d+1}$ has value $\ge
(d+1)s_0$. For $f\in m_n$, we have an expression $f=g+h$ with $g\in F_{d,n}$ and $h\in m_n^{d+1}$. If $\nu(f)<(d+1)s_0$, we must have $\nu(f)=\nu(g)$. Thus $S(\nu)\cap [0,(d+1)s_0)=S_d(\nu)\cap [0,(d+1)s_0)$.
\end{proof}

\begin{Example}\label{Example1} There exists a well ordered subsemigroup $U$ of $\QQ_+$
such that $U$  has ordinal type $\omega$ and $U\ne S(\nu)$ for any valuation $\nu$  as above.
\end{Example}

\begin{proof}  Let $T$ be any subset of $\QQ_+$ such that
1 is the smallest element of $T$ and
$$
|T\cap [n,n+1)|  = \binom{2n}{n}
$$
 for all $n\ge 1$.  For all positive integers $r$, let
$rT=\{a_1+\cdots+a_r\mid a_1,\ldots,a_r\in T\}$.
Let
$U=\omega T=\cup_{r=1}^{\infty}rT$ be the semigroup generated by $T$.
By our construction, $|U\cap [0,r)|<\infty$ for all $r\in \NN$. Thus
$U$ is well ordered and has ordinal type $\omega$.

Suppose that there exists a field $k$, a positive integer $n$, and a valuation $\nu$ such that
$S(\nu)=U$. We have 
$$
\binom{2n}{n}= |T\cap [n,n+1)|\le |U\cap[n,n+1)|\le |U\cap[0,n+1)|\le\binom{2n}{n}-1,
$$
where the last inequality is by Lemma \ref{Lemma2},
which is a contradiction.
\end{proof}

\section{Semigroups of valuations dominating local domains}

Let $\nu$ be a valuation of a field $K$. Let $V$ be the valuation ring of $\nu$, and let $m_V$ be
the maximal ideal of $V$. Suppose that $R$ is a noetherian local domain with quotient field $K$
such that $\nu$ dominates $R$; that is, $R\subset V$ and $m_V\cap R=m_R$ is the maximal ideal of
$R$.  Let $\Gamma$ be the value group of $\nu$. $\Gamma$ has finite rank, since $\nu$ dominates the
noetherian local ring $R$, and by the second condition stated in the introduction. Let $\Psi$ be the convex subgroup of real rank 1 of $\Gamma$. Define a
semigroup of real rank 1 by
$$
S^R(\nu)=\nu(m_R-\{0\})\cap \Psi.
$$
For $a,b\in\Psi$, let $[a,b)=\{c\in\Psi\mid a\le c<b\}$.

 Let $\hat R$ be the $m_R$-adic completion
of $R$, which naturally contains $R$.  Then there exists a
prime ideal $Q$ of $\hat R$ such that  the valuation $\nu$
induces uniquely  a valuation $\overline \nu$ of the quotient field of $\hat R/Q$ which dominates
$\hat R/Q$, $\overline \nu$ has rank 1, $S^{\hat R/Q}(\overline \nu)=S^R(\nu)$, and
$s_0=\mbox{min}\{\overline \nu(f)\mid f\in m_{\hat R/Q}\}$. We will call $Q$ the prime ideal of
elements of $\hat A$ of infinite value. The construction of $Q$ and related considerations are discussed in Spivakovsky \cite{S}, Heinzer and Sally \cite{HS}, Cutkosky \cite{C}, Cutkosky and Ghezzi \cite{CG} and Teissier \cite{T}. We will outline the construction of $Q$ in the following
paragraph.

Let $\alpha=\mbox{min}\{\nu(g)\mid g\in m_R\}$.  Suppose that $f\in \hat R$. Then for any Cauchy
sequence $\{g_i\}$ in  $R$ which converges to $f$, exactly one of the following two conditions must
hold.
\begin{enumerate}
\item[1)] There exists $n\in\NN$ such that $\nu(g_i)\le n\alpha$ for all $i\gg 0$ or

\item[2)] For all $n\in \NN$, there exists $i\in\NN$ such that $\nu(g_i)>n\alpha$.
\end{enumerate}
Moreover, either condition 1) holds for all Cauchy sequences $\{g_i\}$ in $R$ which converge to $f$
or condition 2) hold for all Cauchy sequences $\{g_i\}$ in $R$ which converge to $f$. If condition
1) holds for $f$, then there exists $\lambda\in S^R(\nu)$ such that for any Cauchy sequence
$\{g_i\}$ in $R$ which converges to $f$, there exists $n\in\NN$ (which depends on the Cauchy
sequence) such that $\nu(g_i)=\lambda$ for all $i\ge n$. In this, way may extend $\nu$ to a
function on $\hat R$, defining for $f\in\hat R$, $\nu(f)=\lambda$ if 1) holds, and $\nu(f)=\infty$ if 2) holds. Let
$$
Q=\{f\in\hat R\mid \nu(f)=\infty\}.
$$
$Q$ is a prime ideal in $\hat R$.  We extend $\nu$
to a valuation $\overline \nu$ of the quotient field of $\hat R/Q$ with the desired properties by
defining $\overline\nu(f+Q)=\nu(f)$ for $f\in \hat R$.
If $\Gamma$ has rank 1, we have $Q\cap R=(0)$, and we have an inclusion $R\rightarrow \hat R/Q$.

\begin{Theorem}\label{Theorem1} Suppose that $A$ is an equicharacteristic local domain, and $\nu$ is a valuation
which dominates $A$. Let $s_0=\mbox{min}\{\nu(f)\mid f\in m_A\}$ and
$n=\mbox{dim}_{A/m_A}m_A/m_A^2$. Then
\begin{equation}\label{eq6}
|S^A(\nu)\cap [0,(d+1)s_0)|<\binom{n+d}{n}
\end{equation}
for all $d\in\NN$.
\end{Theorem}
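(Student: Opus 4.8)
The plan is to reduce to the complete, rank-one situation and then reproduce the mechanism of Lemmas \ref{Lemma1} and \ref{Lemma2} over a coefficient field supplied by the Cohen structure theorem. First I would pass from $A$ to the complete local domain $B=\hat A/Q$ constructed in the paragraph preceding the theorem. By that construction $\overline\nu$ is a rank-one valuation dominating $B$ with $S^B(\overline\nu)=S^A(\nu)$, and the minimal value $s_0$ is unchanged: the smallest positive value of $\nu$ lies in the rank-one convex subgroup $\Psi$ (any positive element of $\Gamma$ outside $\Psi$ exceeds all of $\Psi_+$, by convexity), so it is recorded by $\overline\nu$. Since completion preserves $m/m^2$ and passing to the quotient by $Q$ can only shrink it, the embedding dimension $n'$ of $B$ satisfies $n'\le n$; because $\binom{n+d}{n}$ is increasing in $n$, it suffices to prove (\ref{eq6}) for $B$ with $n'$ in place of $n$. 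This reduction is the step I expect to demand the most care, since one must be sure that $S^A(\nu)$, $s_0$, and the embedding dimension all transfer correctly — but all of this is furnished by the construction of $Q$ recalled above.

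Next I would invoke equicharacteristicity. By the Cohen structure theorem the complete equicharacteristic local domain $B$ contains a coefficient field $k\cong B/m_B$ and admits a surjection $\pi\colon R:=k[[x_1,\ldots,x_{n'}]]\twoheadrightarrow B$ carrying $m_R=(x_1,\ldots,x_{n'})$ onto $m_B$, so that $\pi(m_R^{d+1})=m_B^{d+1}$. This is precisely where the equicharacteristic hypothesis enters: it provides the coefficient field, and hence the $k$-vector space structure that made the filtration argument of Lemma \ref{Lemma1} possible.

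Now I would build a finite-dimensional model subspace playing the role of $F_{d,n}$. Let $F\subset R$ be the $k$-span of the monomials of total degree between $1$ and $d$, so $\dim_k F=\binom{n'+d}{n'}-1$, and set $W=\pi(F)\subseteq m_B$, whence $\dim_k W\le\binom{n'+d}{n'}-1$. Truncating a power series at degree $d$ shows that every $f\in m_B$ decomposes as $f=g+h$ with $g\in W$ and $h\in m_B^{d+1}$: lift $f$ to $m_R$, split off the terms of degree $\le d$, and apply $\pi$. Every element of $m_B^{d+1}$ has value $\ge (d+1)s_0$, so, exactly as in Lemma \ref{Lemma2}, if $\overline\nu(f)<(d+1)s_0$ then $\overline\nu(h)>\overline\nu(f)$ forces $\overline\nu(f)=\overline\nu(g)$ with $g\in W$. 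Hence $S^B(\overline\nu)\cap[0,(d+1)s_0)\subseteq\overline\nu(W\setminus\{0\})$.

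Finally I would count values on $W$ as in Lemma \ref{Lemma1}: for each $\lambda$ the set $\{g\in W\mid\overline\nu(g)>\lambda\}$ is a $k$-subspace, and these subspaces strictly decrease as $\lambda$ runs through the attained values, so the number of distinct values of $\overline\nu$ on $W\setminus\{0\}$ is at most $\dim_k W\le\binom{n'+d}{n'}-1$. Combining the displayed inclusion with the reduction gives
\[
|S^A(\nu)\cap[0,(d+1)s_0)|=|S^B(\overline\nu)\cap[0,(d+1)s_0)|\le\binom{n'+d}{n'}-1<\binom{n'+d}{n'}\le\binom{n+d}{n},
\]
which is (\ref{eq6}).
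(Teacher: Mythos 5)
Your proof is correct, and its first half (reduction to $B=\hat A/Q$ via the prime of infinite value, then the Cohen structure theorem applied to the complete equicharacteristic ring $B$) coincides with the paper's. Where you genuinely diverge is in how the counting argument reaches $B$. The paper keeps Lemmas \ref{Lemma1} and \ref{Lemma2} confined to the polynomial ring: since the kernel $P$ of $R=k[[x_1,\ldots,x_n]]\twoheadrightarrow B$ may meet the polynomial subring $k[x_1,\ldots,x_n]$ nontrivially, $\overline\nu$ does not restrict to a valuation there, so the paper chooses a valuation $\nu_1$ dominating $R_P$, forms the composite valuation $\mu$ of $\nu_1$ and $\overline\nu$ on the quotient field of $R$, checks $S^R(\mu)=S^{R/P}(\overline\nu)$, and then uses a Cauchy-sequence approximation (valid because $\overline\nu$ has rank $1$) to cut $\mu$ down to the polynomial subring, where Lemma \ref{Lemma2} applies verbatim. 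You instead transport the vector spaces rather than the valuation: you push the span $F$ of monomials of degree at most $d$ forward to $W=\pi(F)\subseteq m_B$, observe $m_B=W+m_B^{d+1}$, and rerun the subspace-chain count of Lemma \ref{Lemma1} inside $B$ itself --- legitimate because elements of $k^{*}$ are units of $B$, hence have value $0$, so each $\{g\in W\mid \overline\nu(g)>\lambda\}$ is a $k$-subspace, and distinct attained values give a strictly decreasing chain in the finite-dimensional space $W$. Your route avoids constructing the auxiliary valuation $\nu_1$, all facts about composite valuations, and the approximation step, at the cost of repeating the (short) arguments of Lemmas \ref{Lemma1} and \ref{Lemma2}; the paper's route buys literal reuse of Lemma \ref{Lemma2} as stated. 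A minor point of care you add that the paper elides: the embedding dimension may drop on passing to $B$, which you handle by monotonicity of $\binom{n+d}{n}$ in $n$, whereas the paper simply surjects a power series ring in the full $n$ variables onto $B$ --- both are fine.
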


\begin{proof}
Let $\hat A$ be the $m_A$-adic completion of $A$, with natural inclusion $A\rightarrow \hat A$. Let
$Q$ be the prime ideal of elements of $\hat A$ of infinite value, and let $\overline\nu$ be the
valuation of  the quotient field of $\hat A/Q$ induced by $\nu$ which dominates $\hat A/Q$, so that $S^{\hat
A/Q}(\overline\nu)=S^A(\nu)$, and $s_0=\mbox{min}\{\overline\nu(f)\mid f\in m_{\hat A/Q}\}$. Let
$k$ be a coefficient field of $\hat A/Q$. There exists a power series  ring
$R=k[[x_1,\ldots,x_n]]$, and a surjective $k$-algebra homomorphism $\phi:R\rightarrow \hat A/Q$.
Let $P$ be the kernel of $\phi$, so that $R/P\cong \hat A/Q$. We may thus identify $\overline \nu$
with  a rank 1 valuation of the quotient field of $R/P$ dominating $R/P$ such that
$S^{R/P}(\overline\nu)=S^{\hat A/Q}(\overline\nu)$, and $s_0=\mbox{min}\{\overline\nu(f)\mid f\in
m_{R/P}\}$.

Let $\nu_1$ be  a valuation of the quotient field of $R$ which dominates $R_P$, and let $\mu$ be
the composite valuation of $\nu_1$ and $\overline \nu$.  For $f\in R$, we have that $\nu_1(f)=0$ if
and only if $f\not\in P$, and
$$
S^R(\mu)=\mu(m_R-P)=\overline \nu(m_{R/P}-\{0\})=S^{R/P}(\overline\nu).
 $$
 We
 also have that $s_0=\mbox{min}\{\mu(f)\mid f\in m_R\}$.

$R$ contains the polynomial ring $A_n=k[x_1,\ldots,x_n]$. Suppose that $f\in m_R-P$. There exists a
Cauchy sequence $\{g_i\}$ in $A_n$ (for the $m_R$-adic topology) that converges to $f$. Since
$\overline\nu$ has rank 1, and $\nu_1(f)=0$, there exists $l\in\NN$ such that $\mu(h)>\mu(f)$
whenever $h\in m_R^l$. Further, since $\{g_i\}$ is a Cauchy sequence, there exists $e\in\NN$ such
that $f-g_i\in m_R^l$ whenever $i\ge e$.  Thus $\mu(f)=\mu(g_e)$.  It follows that
$S^{(A_n)_{m_n}}(\mu')=S^R(\mu)=S^A(\nu)$ where $\mu'$ is the restriction of $\mu$ to $A_n$, and
$s_0=\mbox{min}\{\mu'(f)\mid f\in m_n\}$. The conclusions of the theorem now follow from Lemma
\ref{Lemma2}.
\end{proof}

 When $\nu$ has
rank 1, $S^A(\nu)=\nu(m_A-\{0\})$ is the semigroup of $\nu$ on $A$.  Thus (\ref{eq6}) gives us a
necessary condition for a subsemigroup $S$ of $\RR_+$ to be the  semigroup of a valuation on an
equicharacteristic local domain $R$.

When $A$ is a regular local ring, the $n$ in the theorem   is the dimension of $A$. Thus we have
the following Corollary, which gives a necessary condition for a subsemigroup of $\RR_+$ to be the
subsemigroup of a valution on a equidimensional regular local ring.

\begin{Corollary}\label{Corollary1} Suppose that $A$ is an equicharacteristic regular local ring of dimension $n$,
and $\nu$ is a rank 1 valuation which dominates $A$. Let $s_0=\mbox{min}\{\nu(f)\mid f\in m_ A\}$.
Then \begin{equation}\label{eq5}
 |\nu(m_A-\{0\})\cap[0,(d+1)s_0)|<\binom{n+d}{n}
\end{equation}
for all $d\in \NN$.
\end{Corollary}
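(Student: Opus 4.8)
**The plan is to deduce Corollary \ref{Corollary1} directly from Theorem \ref{Theorem1} by checking that the hypotheses of the corollary are a special case of those of the theorem.** The corollary asserts the same bound (\ref{eq5}) that appears in (\ref{eq6}), so no new inequality needs to be proved; the entire content of the corollary is the identification of the combinatorial parameter $n$ in the theorem with the Krull dimension of $A$ when $A$ is regular. The one subtlety is that Theorem \ref{Theorem1} states its bound in terms of $n=\dim_{A/m_A}m_A/m_A^2$, the embedding dimension, rather than the Krull dimension, and the passage between the two is exactly the defining property of a regular local ring.

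First I would observe that an equicharacteristic regular local ring $A$ of dimension $n$ is in particular an equicharacteristic local domain, so Theorem \ref{Theorem1} applies to $A$ together with the dominating valuation $\nu$. This gives
\begin{equation*}
|S^A(\nu)\cap[0,(d+1)s_0)|<\binom{n'+d}{n'}
\end{equation*}
for all $d\in\NN$, where $n'=\dim_{A/m_A}m_A/m_A^2$ is the embedding dimension and $s_0=\min\{\nu(f)\mid f\in m_A\}$. Next I would invoke the standard characterization of regularity: a noetherian local ring $A$ is regular precisely when its embedding dimension equals its Krull dimension, so here $n'=n$. Substituting $n'=n$ into the displayed inequality yields exactly (\ref{eq5}).

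Finally, since $\nu$ is assumed to have rank $1$, the convex subgroup $\Psi$ of real rank $1$ is all of $\Gamma$, and hence $S^A(\nu)=\nu(m_A-\{0\})\cap\Psi=\nu(m_A-\{0\})$; this matches the notation used in the corollary's statement, so the two forms of the inequality coincide. I do not expect any genuine obstacle here: the corollary is a bookkeeping specialization of the theorem, and the only point requiring care is making the embedding-dimension-versus-Krull-dimension identification explicit and noting that it is precisely the regularity hypothesis that licenses it.
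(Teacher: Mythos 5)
Your proposal is correct and matches the paper's own (implicit) argument exactly: the paper deduces the corollary from Theorem \ref{Theorem1} by noting that for a regular local ring the embedding dimension $\dim_{A/m_A}m_A/m_A^2$ equals the Krull dimension, and by the remark preceding the corollary that rank $1$ gives $S^A(\nu)=\nu(m_A-\{0\})$. Both points you flag as needing care are precisely the two observations the paper makes, so there is nothing to add.
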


The necessary condition  (\ref{eq5}) is  far from giving  a sufficient condition for a subsemigroup
of $\RR_+$ to be the  semigroup of a valuation on an equidimensional  regular local ring  $A$, even
in the case when $A$ has dimension $n=1$.  Suppose that $\nu$ is a valuation dominating an
equidimensional  regular local ring $A$ of dimension $n=1$. $\nu$ must then be equivalent to the
$m_A$-adic valuation of $A$, and we thus have that $\nu(m_A-\{0\})=s_0\NN$. Thus
$$
|\nu(m_A-\{0\})\cap[0,(d+1)s_0)|=d
$$
for all $d\ge 1$, which is significantly less than the bound $\binom{1+d}{1}=d^2+d$ of the
corollary.

The criterion of the theorem is strong enough to give us a simple method of constructing well
ordered semigroups of ordinal type $\omega$ which are not value semigroups. The existence of such
examples was not previously known.

\begin{Example}\label{Example2} There exists a well ordered subsemigroup $U$ of $\QQ_+$ such that $U$ has ordinal type $\omega$ and $U\ne\nu(m_A-\{0\})$
 for any valuation $\nu$ dominating an equicharacteristic  noetherian local domain $A$.
\end{Example}

\begin{proof} Let $U$ be the semigroup constructed in Example \ref{Example1}. By Theorem
\ref{Theorem1} and the proof of Example \ref{Example1}, $U$ cannot be a semigroup of a valuation dominating an equicharacteristic  local
domain.
\end{proof}

\end{document}